\def\thm@space@setup{%
  \thm@preskip=\parskip \thm@postskip=0pt
}
\declaretheoremstyle[%
  spaceabove=6pt,%
  spacebelow=6pt,%
  headfont=\normalfont\itshape,%
  postheadspace=1em,%
  qed=\qedsymbol%
]{mystyle}
\declaretheorem[name={Proof},style=mystyle,unnumbered,
]{prf}
\def\qed{\hfill\ifhmode\unskip\nobreak\fi\quad\ifmmode\Box\else\hfill$\Box$\fi}
\def\ite#1{\hfill\break${}$\hbox to 50pt {\quad(#1)\hfill}}
\newtheorem{thm}{Theorem}
\newtheorem{definition}{Definition}
\newtheorem{conj}[thm]{Conjecture}
\newtheorem{prop}{Proposition}[section]
\def\ex{{\rm{ex}}}
\newcommand{\Mod}[1]{\ (\mathrm{mod}\ #1)}
\newcommand{\vb}[1]{\boldsymbol{#1}}
\begin{document}

\title{\vspace{-0.7in} Tight paths \\ in \\ convex geometric hypergraphs}

\author{
\hspace{0.8in} Zolt\'an F\" uredi\thanks{Research supported by grant K116769
from the National Research, Development and Innovation Office NKFIH and
by the Simons Foundation Collaboration grant \#317487.}
\and
Tao Jiang\thanks{Research partially supported by National Science Foundation award DMS-1400249.}
\and
Alexandr Kostochka\thanks{Research  supported in part by NSF grant
 DMS-1600592 and by grants 18-01-00353A  and 16-01-00499 of the Russian Foundation for Basic Research.
} \hspace{0.8in} \and
Dhruv Mubayi\thanks{Research partially supported by NSF award DMS-1300138.} \and Jacques Verstra\"ete\thanks{Research supported by NSF award DMS-1556524.}
}

\maketitle

\vspace{-0.4in}

\begin{abstract}
In this paper, we prove a theorem on tight paths in convex geometric hypergraphs, which is asymptotically sharp in infinitely many cases. Our geometric theorem is a common generalization of early results of Hopf and Pannwitz~\cite{Hopf-Pannwitz}, Sutherland~\cite{Sutherland}, Kupitz and Perles~\cite{Kupitz-Perles} for convex geometric graphs, as well as the classical Erd\H{o}s-Gallai Theorem~\cite{Erdos-Gallai} for graphs. As a consequence, we obtain the first substantial improvement on the Tur\'{a}n problem for tight paths in uniform hypergraphs.
\end{abstract}

\section{Introduction}

In this paper, we address extremal questions for tight paths in uniform hypergraphs and in convex geometric hypergraphs.
For $k \geq 1$ and $r \geq 2$, a {\em tight $k$-path} is an $r$-uniform hypergraph (or simply $r$-graph) $P_k^r = \{v_iv_{i+1}\dots v_{i+r-1} : 0 \leq i < k\}$. Let $\ex(n,P_k^r)$ denote the maximum number of edges in an $n$-vertex
$r$-graph not containing a tight $k$-path. It appears to be difficult to determine $\ex(n,P_k^r)$ in general, and even the asymptotics as $n \rightarrow \infty$ are not known.
The following is a special case of a conjecture of Kalai~\cite{FF} on tight trees, generalizing the well-known Erd\H{o}s-S\'{o}s Conjecture~\cite{Erdos-Sos-Tree}:

\begin{conj}[Kalai]\label{kalaiconj}
For $n \geq r \geq 2$ and $k \geq 1$, $\ex(n,P_k^r) \leq \frac{k-1}{r}{n \choose r - 1}$.
\end{conj}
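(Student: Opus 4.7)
My plan is to induct on $k$, following the Erd\H{o}s--Gallai template for paths in graphs. The base case $k=1$ is trivial, and $k=2$ is a shadow count: avoiding $P_2^r$ means every $(r-1)$-subset sits in at most one edge, so summing $r$ over each edge gives $r\cdot e(H)\leq \binom{n}{r-1}$, which matches the claimed bound. The case $r=2$ is the Erd\H{o}s--Gallai theorem itself, and I will try to port that proof to arbitrary $r$.

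For the inductive step with $k\geq 3$, I let $H$ be an $r$-graph on $n$ vertices with $e(H)>\frac{k-1}{r}\binom{n}{r-1}$ and argue dichotomously. If some $(r-1)$-set $S$ has codegree strictly below a threshold $t$ of order $(k-1)/r$, I remove a vertex of $S$ (or the edges through $S$) and verify that the resulting hypergraph still exceeds the Kalai bound on its reduced vertex set, so that induction on $n$ produces a tight $P_k^r$. Otherwise every $(r-1)$-set has codegree at least $t$, and I consider a tight path $P=v_0v_1\ldots v_{s+r-2}$ of maximum length in $H$. Maximality forces every edge through the terminal $(r-1)$-set $\{v_s,\ldots,v_{s+r-2}\}$ to use only vertices of $P$, so the codegree lower bound supplies either an immediate extension of $P$ beyond $v_{s+r-2}$, contradicting maximality, or a P\'osa-type rotation producing another maximum tight path with a fresh endpoint. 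Iterating rotations is meant to produce enough distinct endpoints that if $s<k$ a double-count against $e(H)$ delivers a contradiction.

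The main obstacle, and the reason this approach stops short of a full proof, is the rotation step. A graph P\'osa rotation reverses a suffix of the path and the result is again a path, but for a tight $r$-path reversing a suffix destroys the consecutive $(r-1)$-overlap at both sides of the pivot. Instead one has to splice in an edge that shares its $(r-1)$-tail with the current endpoint while contributing one new vertex, and then track how the interior of the path deforms under repeated splices. Without extra rigidity the set of endpoints reachable this way need not grow linearly and the double-count collapses. The remainder of the present paper sidesteps exactly this obstacle by passing to the convex geometric setting, where the ambient cyclic order on the vertices disciplines both extensions and rotations; transplanting that discipline back to abstract hypergraphs appears to be the genuinely new idea that Conjecture~\ref{kalaiconj} demands.
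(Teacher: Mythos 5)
The statement you selected is Conjecture~\ref{kalaiconj}, Kalai's conjecture on tight paths: the paper presents it as an open problem and offers no proof, only the weaker upper bounds of Theorem~\ref{tight-path}. There is thus no proof of record to compare against, and your proposal---by your own candid account---does not close the conjecture either. That said, your self-assessment is essentially accurate. The base cases you state are correct: $k=1$ forces an empty hypergraph; $k=2$ forces every $(r-1)$-set to lie in at most one edge, whence the shadow double-count gives $r|H|\le\binom{n}{r-1}$; and $r=2$ is Erd\H{o}s--Gallai. The obstruction you identify in the inductive step is the real one. Reversing a suffix of a tight $r$-path for $r\ge 3$ does not produce a tight path, because the required consecutive $(r-1)$-overlaps break on both sides of the pivot. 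Without ambient rigidity---such as the cyclic order that disciplines the zigzag extensions in this paper's convex geometric setting---there is no known way to generate a linearly growing set of reachable endpoints from a maximum tight path, so the P\'osa rotation--extension scheme does not transplant, and the double-count you envisage never gets started. That is precisely the new idea a proof of Conjecture~\ref{kalaiconj} would have to supply, and the paper does not claim to have it.

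There is also a secondary flaw in the low-codegree branch of your dichotomy. To delete a vertex $v\in S$ and induct on $n$, you must verify $|H|-\deg_H(v)>\frac{k-1}{r}\binom{n-1}{r-1}$, which requires a bound on the \emph{vertex degree} of $v$ of roughly $\deg_H(v)\le\frac{k-1}{r}\binom{n-1}{r-2}$. But you control only the codegree of the $(r-1)$-set $S$, which says nothing about $\deg_H(v)$: a vertex can simultaneously have tiny codegree with every $(r-1)$-set through it and enormous degree. Deleting instead ``the edges through $S$'' keeps $n$ fixed and does not set up the induction on $n$ at all. This step would need repair as well, though it is minor next to the missing rotation lemma.
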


A construction based on combinatorial designs shows this conjecture if true is tight -- the existence of designs was established by Keevash~\cite{Keevash} and also more recently by Glock, K\"{u}hn, Lo and Osthus~\cite{GKLO}.
It is straightforward to see that any $n$-vertex $r$-graph $H$ that does not contain a tight $k$-path has at most $(k - 1){n \choose r - 1}$ edges. Patk\'{o}s~\cite{Patkos} gave an improvement over this
bound in the case $k < 3r/4$. In the special case $k = 4$ and $r = 3$, it is shown in~\cite{FJKMV} that $\ex(n,P_4^3) = {n \choose 2}$ for all $n \geq 5$. In this paper, we give the first non-trivial upper bound on $\ex(n,P_k^r)$ valid for all $k$ and $r$:

\begin{thm}\label{tight-path}
For $n \geq 1$, $r \geq 2$, and $k \geq 1$,
\[ \ex(n,P_k^r)  \leq \left\{\begin{array}{ll}
\frac{k-1}{2}{n \choose r - 1} & \mbox{ if }r\mbox{ is even} \\
\frac{1}{2}(k + \lfloor \frac{k-1}{r}\rfloor){n \choose r - 1} & \mbox{ if }r\mbox{ is odd}
\end{array}\right.\]
\end{thm}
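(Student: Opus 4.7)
The plan is to deduce Theorem \ref{tight-path} from the main convex geometric hypergraph (CGH) result of the paper, by averaging over random cyclic orderings of the vertex set. Given an $n$-vertex $r$-graph $H$ with no tight $k$-path, we place the vertices of $H$ on a convex $n$-gon using a uniformly random cyclic permutation; the resulting convex geometric $r$-graph $\widetilde H$ has the same edge set as $H$, and in particular still avoids a tight $k$-path. The geometric theorem classifies edges of $\widetilde H$ by their ``cyclic type'' (the sorted tuple of arc-gaps between consecutive vertices of the edge around the $n$-gon), and gives an upper bound on the number of edges of any given type in a CGH with no tight $k$-path. Taking expectations over the random cyclic ordering converts these per-type geometric bounds into the absolute bound on $|E(H)|$ stated in the theorem.

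The factor $\tfrac{1}{2}$ improvement over the trivial bound $(k-1)\binom{n}{r-1}$ should reflect a pairing in the cyclic setting: every $(r-1)$-subset placed on the $n$-gon has two natural ``sides'' (clockwise and counterclockwise), and the geometric theorem should force that the two sides cannot both support many path-extending edges without creating a longer tight path. After averaging, each $(r-1)$-set contributes only $\tfrac{k-1}{2}$ extensions rather than $k-1$. The additional correction $\tfrac{1}{2}\lfloor (k-1)/r\rfloor\binom{n}{r-1}$ in the odd-$r$ case should arise from ``self-dual'' cyclic types fixed under the reflection $x\mapsto -x$ of the $n$-gon, which exist only when $r$ is odd; these types are not paired up in the averaging and therefore contribute an extra additive term.

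The main obstacle is the CGH theorem itself. Following the paradigm of the classical Hopf--Pannwitz, Kupitz--Perles, and Erd\H{o}s--Gallai theorems (of which the CGH theorem is announced as a common generalization), the proof should analyze the structure of a longest tight path $P$ in a convex geometric $r$-graph and use the cyclic order on $V(P)$ to rule out configurations of edges at the endpoints that would extend $P$. The delicate part is accounting exactly for the edges of each cyclic type near the endpoints and tracking the parity of $r$ through the bookkeeping to obtain the two different constants. Once the CGH theorem is in hand, the reduction to Theorem \ref{tight-path} via averaging is essentially a routine probabilistic calculation, although some care with rounding is needed to extract the floor term in the odd-$r$ bound.
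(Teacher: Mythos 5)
Your proposed reduction does not work as stated, and the mechanisms you sketch for the $\tfrac12$ factor and the $\lfloor (k-1)/r\rfloor$ correction are not the ones the paper uses.

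The central problem is quantitative. If you place $V(H)$ on a convex $n$-gon via a random cyclic ordering and observe that the resulting cgh $\widetilde H$ (having the same edges) contains no $k$-zigzag, then the cgh bound (Theorem~\ref{mainzigzag}) gives
$|H| = |\widetilde H| \le \tfrac{(k-1)(r-1)}{r}\binom{n}{r-1}$.
For $r\ge 4$ this is strictly weaker than the target $\tfrac{k-1}{2}\binom{n}{r-1}$, and no amount of averaging over cyclic orderings can improve it, since the bound holds for every ordering and $|H|$ does not depend on the ordering. Your ``cyclic type'' refinement and the ``two-sides pairing'' heuristic are not developed into an argument, and there is nothing in Theorem~\ref{mainzigzag} that gives per-type bounds or exploits an orientation-reversing symmetry in the way you propose. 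Your ``analyze a longest tight path at its endpoints'' strategy for the geometric theorem is also not what the paper does: the proof of Theorem~\ref{mainzigzag} is a global injective/counting argument on the set $S_k(H)$ of ends of $k$-zigzags, not a local longest-path analysis.

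What the paper actually does for even $r$ is fundamentally different: set $s=r/2$, take a uniformly random $s$-coloring $V(H)\to\{0,\dots,s-1\}$, and restrict to the random subgraph $G\subseteq H$ of edges with exactly two vertices in each color class $B_i$. One then puts a cyclic order on each $B_i$ \emph{separately} and defines ``good paths'' that alternate in a prescribed $2$-$2$-$\cdots$-$2$ pattern through the classes. Running the zigzag-extension machinery of Section~\ref{extend} in this colored setting yields $|S_k(G)| \ge 2^s|G| - 2^{s-1}\sum_{i=0}^{k-2}|\partial_{h(i)}G|$, and taking expectations over the coloring produces $|H| \le \tfrac{k-1}{2}|\partial H|$; the $\tfrac12$ comes from the ratio of the two expectation constants in~(\ref{tb}), not from a clockwise/counterclockwise pairing of $(r-1)$-sets. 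For odd $r$ the paper does not use self-dual cyclic types; it reduces to the even case by appending a large auxiliary vertex set $X$ and forming $H^+ = \{\{x\}\cup e : x\in X,\ e\in H\}$, showing that a tight $\ell$-path in $H^+$ with $\ell = k + \lfloor (k-1)/r\rfloor + 1$ would force a tight $k$-path in $H$, applying the even bound to $H^+$, and letting $|X|\to\infty$. That bookkeeping is where the floor term arises. You should rethink the reduction with the random $s$-coloring and the separate per-class cyclic orders as the key structural device.
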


The case $r = 2$ of this result is the well-known Erd\H{o}s-Gallai Theorem~\cite{Erdos-Gallai} on paths in graphs. We prove Theorem \ref{tight-path}
by introducing a novel method for extremal problems for paths in convex geometric hypergraphs.

\medskip

{\bf Convex geometric hypergraphs.}  A {\em convex geometric hypergraph} (or cgh for short) is an $r$-graph whose vertex set is a set $\vb{\Omega}_n$ of $n$ vertices in strictly convex position in the plane, and whose edges are viewed as convex $r$-gons with vertices from $\vb{\Omega}_n$. Given an $r$-uniform cgh $F$, let $\ex_{\circlearrowright}(n,F)$ denote the maximum number of edges in an $n$-vertex $r$-uniform cgh that does not contain $F$.
Extremal problems for convex geometric graphs (or cggs for short) have been studied extensively, going back to theorems in the 1930's on disjoint line segments in the plane.
We refer the reader to the papers of Bra\ss, K\'{a}rolyi and Valtr~\cite{Brass-Karolyi-Valtr}, Capoyleas and Pach~\cite{Capoyleas-Pach} and the references therein for many related extremal problems on convex geometric graphs and to Aronov, Dujmovi\v{c}, Morin, Ooms and da Silveira~\cite{Aronov}, Bra\ss~\cite{Brass}, Brass, Rote and Swanepoel~\cite{Brass-Rote-Swanepoel}, and Pach and Pinchasi~\cite{Pach-Pinchasi} for problems in convex geometric hypergraphs, and their connections to important problems in
discrete geometry, as well as the triangle-removal problem (see Aronov, Dujmovi\v{c}, Morin, Ooms and da Silveira~\cite{Aronov} and Gowers and Long~\cite{Gowers-Long}).

\medskip

Concerning results on convex geometric graphs, let $\mathsf M_k$ denote the cgg consisting of $k$ pairwise disjoint line segments. Generalizing results of Hopf and Pannwitz~\cite{Hopf-Pannwitz} and Sutherland~\cite{Sutherland}, Kupitz~\cite{Kupitz} and Kupitz and Perles~\cite{Kupitz-Perles}
showed that for $n \geq k\ge 2$,
\[ \ex_{\circlearrowright}(n,\mathsf M_k) \leq (k - 1)n.\]
Perles proved the following even stronger theorem. Define a {\em $k$-zigzag} $\mathsf P_k$ to be a $k$-path $v_0 v_1 \dots v_k$ with vertices in $\vb{\Omega}_n$
such that in a fixed cyclic ordering of $\vb{\Omega}_n$, the vertices appear in the order $v_0, v_2, v_4, \dots, v_5, v_3, v_1, v_0$ (see the left picture in Figure 1).

\begin{thm}[Perles] \label{perlesthm}  For $n, k \ge 1$,  $\ex_{\circlearrowright}(n,\mathsf P_k) \leq (k - 1)n/2$.
\end{thm}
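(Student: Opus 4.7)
The statement has exactly the shape of the Erd\H{o}s--Gallai theorem on paths (the $r=2$ case of Theorem~\ref{tight-path}), so I would attempt an adaptation of a P\'osa-style longest-path argument. Proceed by induction on $k$; the base cases $k = 1$ (no edges) and $k = 2$ (a $\mathsf{P}_2$-free cgg is a matching, hence $|E(G)| \le n/2$) are immediate.

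For the inductive step, assume $k \ge 3$ and fix a cgg $G$ on $\vb{\Omega}_n$ with no $\mathsf{P}_k$. Let $P = v_0 v_1 \cdots v_\ell$ be a zigzag of maximum length in $G$, so $\ell \le k-1$. The zigzag's cyclic pattern forces the even-indexed vertices $v_0,v_2,v_4,\ldots$ to appear consecutively along one arc of $\vb{\Omega}_n$ and the odd-indexed vertices $v_1,v_3,v_5,\ldots$ along the complementary arc. Consequently, for any vertex $u$ extending $P$ at $v_\ell$ to a zigzag $Pu$ of length $\ell+1$, the vertex $u$ must lie in a specific open arc $\alpha$ bounded by $v_{\ell-1}$ and $v_\ell$. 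The maximality of $P$ confines $N_G(v_\ell)\cap\alpha$ to $V(P)$, and symmetrically for $v_0$. I would then perform a P\'osa-type rotation: for each chord $v_i v_\ell \in E(G)$ with a suitable cyclic position (to be identified), the sequence $v_0 \cdots v_i v_\ell v_{\ell-1} \cdots v_{i+1}$ is again a zigzag of length $\ell$ ending at $v_{i+1}$, to which the same maximality constraint applies. Iterating rotations yields a ``rotation class'' $X$ of endpoints whose external neighborhoods are all confined to a controlled arc; bounding $\sum_{x\in X}\deg_G(x)$ by means of these constraints either closes the induction directly or identifies a subset whose removal reduces the problem to a smaller instance covered by the inductive hypothesis.

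The \textbf{main obstacle} is the rotation step. In an ordinary graph, swapping a chord $v_i v_\ell$ always yields another path, but in a cgg the zigzag property is a rigid cyclic-order constraint that can be violated by the swap. I would need to characterize precisely which chords at $v_\ell$ admit a \emph{legal} rotation (i.e.\ one producing another valid zigzag) and show that these chords form a structured sub-family---nested or cyclically monotone, presumably---so that the local constraints compound into a sharp neighborhood bound. Establishing this structural lemma, and then balancing it against the overall edge count (possibly via discharging or a direct double-count on pairs (edge, zigzag)), is the technical heart of the argument, and is where Perles' proof presumably exploits the convex-geometric data most essentially.
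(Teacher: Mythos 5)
Your proposal outlines a P\'osa-style rotation argument, which is a genuinely different route from the paper's proof. The paper does \emph{not} use rotations at all. For the graph case (proved for odd-length zigzags in Section~\ref{extend} to motivate the general theorem), the paper uses a simple edge-stripping induction: for each vertex $v$ let $f(v)$ be the first neighbor of $v$ after $v$ in the cyclic order, delete the set $E = \{vf(v)\}$ of at most $n$ edges, and observe that any $(2k-1)$-zigzag in $G\setminus E$ extends at both ends (via $f$) to a $(2k+1)$-zigzag in $G$, so $G\setminus E$ is $(2k-1)$-zigzag-free and induction closes. The full Theorem~\ref{perlesthm} is then recovered as the $r=2$ instance of Theorem~\ref{mainzigzag}, whose proof is a double count on the set $S_k(H)$ of (ordered) ends of $k$-zigzags: Proposition~\ref{injection} injects extendable ends into $S_{k+1}(H)$, Proposition~\ref{injection2} bounds the non-extendable ends $T_k(H)$ by $(r-1)|\partial H|$, and iterating gives $|S_k| \ge r|H| - (r-1)(k-1)|\partial H|$, which forces the bound when $S_k = \emptyset$. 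No maximality of a longest path, and no rotation of chords, enters anywhere.

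Beyond being a different method, your proposal has a genuine unresolved gap, one which you yourself flag honestly: the rotation step $v_0\cdots v_i v_\ell v_{\ell-1}\cdots v_{i+1}$ need not be a zigzag, because the zigzag condition is a rigid constraint on the cyclic positions of \emph{every} vertex of the path, not just a local adjacency condition. You propose that the legal rotations might form a ``nested or cyclically monotone'' family, but you do not identify such a structure or prove it exists, and it is not obvious that a workable rotation class survives at all. The paper's mechanism sidesteps this entirely: rather than rotate a fixed longest path, it counts \emph{all} zigzag ends and shows that an end which cannot be extended forward (i.e.\ $X(\vb{v}_k)=\emptyset$) must be rare, because it is determined, up to bounded ambiguity, by an element of the shadow. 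That injection (Proposition~\ref{injection2}) is the ``sharp neighborhood bound'' you were hoping to extract from rotations, obtained instead with a one-line forward-extension dichotomy. If you want to salvage a P\'osa-flavored argument, you would essentially need to prove the structural lemma you defer; as written, the plan is not yet a proof.
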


The bound in Theorem~\ref{perlesthm}  is tight when $k$ divides $n$ since any disjoint union of cliques of order $k$ does not contain any path with $k$ edges.
In particular, since $\mathsf P_{2k-1}$ contains $\mathsf M_k$, Theorem~\ref{perlesthm} implies $\ex_{\circlearrowright}(n,\mathsf M_k) \leq \ex_{\circlearrowright}(n,\mathsf P_{2k-1}) \leq (k - 1)n$. It appears to be challenging to determine for all $k$ and $r$ the exact value of the extremal function or the extremal cghs without $k$-zigzag (see Keller and Perles~\cite{Chaya-Perles} for a discussion of extremal constructions in the case $r = 2$). 

\medskip

In this paper, we generalize Theorem \ref{perlesthm}
to convex geometric hypergraphs, and use it's proof technique to prove Theorem \ref{tight-path}. We let $\prec$ denote a fixed cyclic ordering of the vertices of $\vb{\Omega}_n$, and let $[u,v] = \{w \in \vb{\Omega}_n : u \prec w \prec v\}$ denote a {\em segment} of $\vb{\Omega}_n$. If $I_1, I_2, \ldots \subset \vb{\Omega}_n$, then we write $I_1 \prec I_2 \prec \cdots$ if all vertices of $I_j$ are followed in the  ordering $\prec$ by all vertices of $I_{j+1}$ for $j \geq 1$.  We use the following definition of a path in a convex geometric hypergraph:

\begin{definition} [Zigzag paths] \label{defz}
For $k \geq 1$ and even $r \geq 2$, a tight $k$-path $v_0 v_1 \dots v_{k + r - 2}$ with vertices in $\vb{\Omega}_n$ is a {\em $k$-zigzag}, denoted $\mathsf P_k^r$, if
there exist disjoint segments $I_0 \prec I_1 \prec \dots \prec I_{r-1}$ of $\vb{\Omega}_n$ such that $\{v_i : i \equiv j \Mod{r}\} \subseteq I_j$ for $0 \leq j < r$ and
\vspace{-0.1in}
\begin{center}
\begin{tabular}{lp{5.8in}}
{\rm (i)}  & if $j$ is even, then $v_j  \prec  v_{j+r}  \prec  v_{j + 2r} \prec  \cdots$. \\
{\rm (ii)} & if $j$ is odd, then $v_j \succ v_{j+r} \succ  v_{j+2r} \succ \cdots$.
\end{tabular}
\end{center}
\end{definition}
\vspace{-0.1in}

In words, the vertices of the zigzag with subscripts congruent to $j \Mod{r}$ appear in increasing order of subscripts if $j$ is even, followed by the vertices with subscripts congruent to $j + 1 \Mod{r}$ in decreasing order of subscripts with respect to the cyclic ordering $\prec$.
In the case of graphs, a $k$-zigzag is simply $\mathsf P_k^2 = \mathsf P_k$ from Theorem \ref{perlesthm}. We give examples of zigzag paths $\mathsf P_6^2$ and $\mathsf P_5^4$ in Figure \ref{fig:zigzags} below (the last edge of each path is indicated in bold).

\begin{figure}[!ht]
\begin{center}
\includegraphics[width=4in]{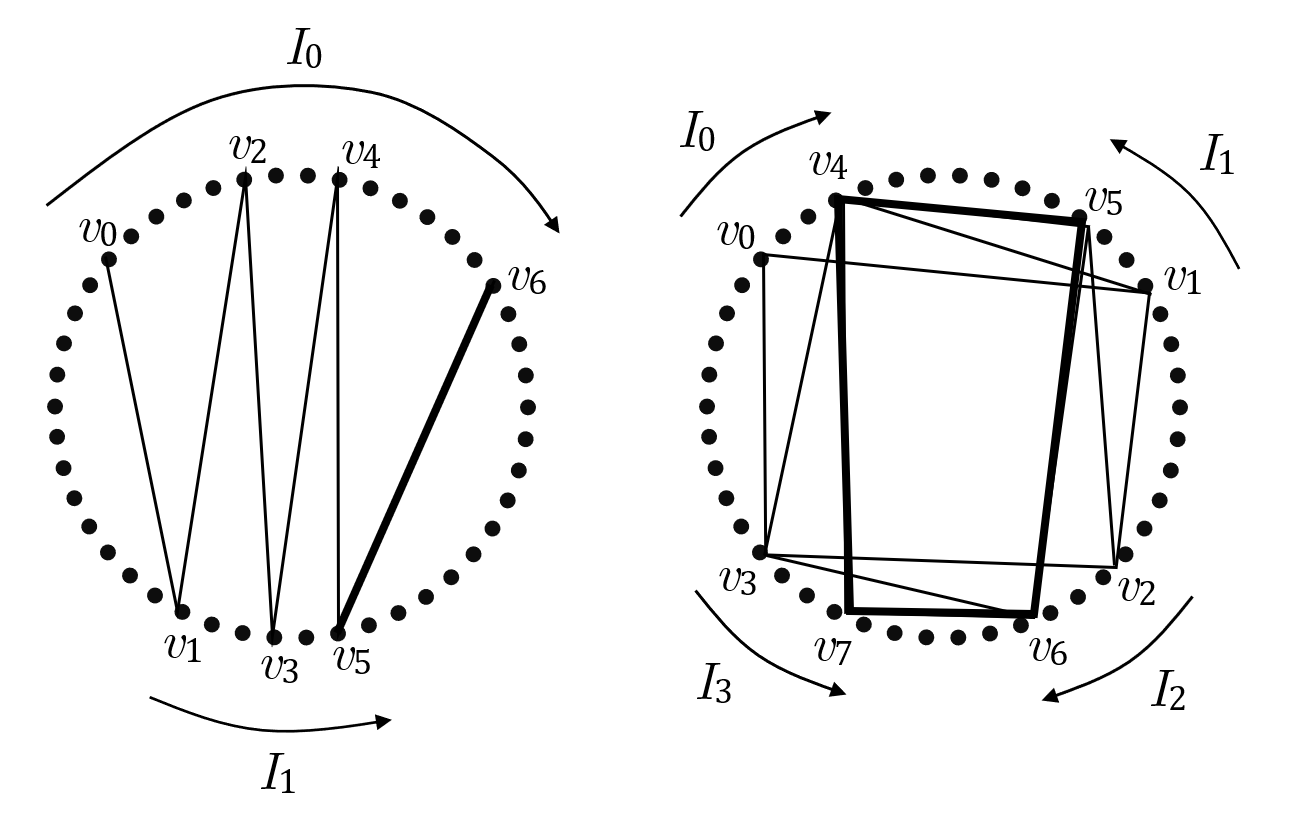}
\caption{Zigzag paths}
\label{fig:zigzags}

\end{center}

\end{figure}

\vspace{-0.2in}

The following result generalizes Theorem~\ref{perlesthm} to $r$-uniform cghs when $r$ is even:

\begin{thm}  \label{mainzigzag}
Let $n, k \ge 1$, and let $r \geq 2$ be even. Then
\[ \ex_{\circlearrowright}(n,\mathsf P_k^r) \leq \frac{(k-1)(r - 1)}{r}{n \choose r-1}.\]
\end{thm}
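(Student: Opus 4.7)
I would attempt Theorem \ref{mainzigzag} by induction on $k$, aiming to reduce the $r$-uniform statement to Perles' graph theorem (Theorem \ref{perlesthm}) via an arc-based co-degree decomposition. The target inequality is equivalent to
\[
\sum_{S \in \binom{\vb{\Omega}_n}{r-1}} d_H(S) \;=\; r \cdot e(H) \;\leq\; (k-1)(r-1)\binom{n}{r-1},
\]
so it suffices to bound the total $(r{-}1)$-co-degree of $H$ by $(k-1)(r-1)\binom{n}{r-1}$. The base case $k = 1$ is trivial, since the bound equals $0$ and a $1$-zigzag is a single edge.

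The key idea I would pursue is an \emph{arc-type decomposition}. Each $(r{-}1)$-set $S \subseteq \vb{\Omega}_n$ partitions $\vb{\Omega}_n \setminus S$ into $r-1$ cyclic arcs $A_1(S),\ldots,A_{r-1}(S)$ determined by $\prec$; write $d_H(S) = \sum_{i=1}^{r-1} d_H^{(i)}(S)$ where $d_H^{(i)}(S) = |\{v \in A_i(S) : S \cup \{v\} \in H\}|$. The plan is to prove, for each fixed arc type $i$,
\[
\sum_{S \in \binom{\vb{\Omega}_n}{r-1}} d_H^{(i)}(S) \;\leq\; (k-1)\binom{n}{r-1},
\]
and then sum over $i$ to obtain the theorem. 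For each $i$ I would encode the arc-$i$ extensions as edges of an auxiliary convex geometric graph $G_i$ on $\vb{\Omega}_n$ (or a suitable blowup), and show that any $k$-zigzag in $G_i$ lifts to a $\mathsf P_k^r$ in $H$; Theorem \ref{perlesthm} applied to $G_i$ then gives the arc-type bound.

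The principal obstacle is defining the auxiliary graphs $G_i$ and verifying the lifting step, which must respect the parity alternation conditions (i)--(ii) of Definition \ref{defz}. A tight $k$-path in $H$ has $k+r-1$ vertices while a $k$-zigzag in $G_i$ has only $k+1$, so the correspondence must hide the intermediate $r-2$ vertices per hypergraph edge inside the shared $(r-1)$-shadow of consecutive edges of $H$. The hypothesis that $r$ is even appears crucial here: it lets the $r$ residues mod $r$ split evenly into forward- and backward-directed classes, giving hope for a coherent encoding aligned with the graph zigzag's alternation. I expect that verifying this lift exists and is compatible for \emph{every} arc type $i$, with delicate attention to wrap-around cases near the boundary of the cyclic order, will be the technical heart of the proof. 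Should the clean reduction to Theorem \ref{perlesthm} prove too rigid, an alternative plan is direct induction on $k$ via an edge-removal argument, iteratively stripping off a set of at most $\tfrac{r-1}{r}\binom{n}{r-1}$ ``maximally extending'' edges while preserving $\mathsf P_{k-1}^r$-freeness in the residual hypergraph.
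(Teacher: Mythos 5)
Your approach has a genuine gap at its core. The arc-type decomposition is not well-defined: the $r-1$ arcs $A_1(S),\ldots,A_{r-1}(S)$ cut out by an $(r-1)$-set $S$ depend on $S$, and in a cyclic order there is no canonical way to label ``which arc is number $i$'' consistently across different $S$'s, so the per-arc sums $\sum_S d_H^{(i)}(S)$ have no coherent meaning without further choices. Even granting a labeling convention, the auxiliary graphs $G_i$ are never specified, and the proposed lift from a $k$-zigzag in $G_i$ (which has $k+1$ vertices) to a $\mathsf P_k^r$ in $H$ (which has $k+r-1$ vertices) must somehow reconstruct the $r-2$ hidden vertices per step from a graph path that carries no memory of them; tight paths in hypergraphs have an $(r-1)$-vertex overlap that shifts at every step, so there is no static ``shared shadow'' to hide them in. You flag this as the principal obstacle, but it is not a technical detail to be chased down — it is the place where a reduction to Theorem \ref{perlesthm} genuinely breaks.

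The paper does something quite different. Rather than decomposing the co-degree sum or reducing to the $r=2$ case, it generalizes Perles' \emph{proof technique} to hypergraphs directly, counting ordered ends $\vb{v}_k=(v_{k-1},\ldots,v_{k+r-2})$ of $k$-zigzags. The base case gives $|S_1(H)|\ge r|H|$ (each edge yields $r$ orderings compatible with Definition \ref{defz}). Two injections then drive an induction: the set $T_k(H)$ of ``stuck'' ends injects into cyclically ordered elements of $\partial H$, giving $|T_k(H)|\le (r-1)|\partial H|$, while the remaining ends inject into $S_{k+1}(H)$ via a greedy extension. This yields $|S_k(H)|\ge r|H|-(r-1)(k-1)|\partial H|$; setting $S_k(H)=\emptyset$ and $|\partial H|\le\binom{n}{r-1}$ finishes. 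Your backup plan of iterative edge-stripping is closer in spirit to Perles' graph proof (as recapped in Section \ref{extend}), but in the hypergraph setting the ``blocking'' object for an extension is a full $(r-1)$-tuple end, not a single vertex, and the set of maximally-extending edges you'd need to delete is not obviously small; the paper sidesteps this by bounding stuck ends by the shadow rather than deleting edges.
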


This theorem is asymptotically sharp in infinitely many cases, and is a common generalization Theorem \ref{perlesthm} and the Erd\H{o}s-Gallai Theorem~\cite{Erdos-Gallai}. The proof of  Theorem \ref{mainzigzag} is also the basis for our proof of Theorem \ref{tight-path}.

\medskip

{\bf Organization.} This paper is organized as follows. In Section \ref{extend}, we give a method for extending a $k$-zigzag in an $r$-uniform cgh to a $(k + 1)$-zigzag. This is used in the short proof of Theorem \ref{mainzigzag} in Section \ref{even}. In Section \ref{lb}, we give constructions of dense
cghs without $k$-zigzags, and in Section~\ref{proof-tightpath}, we prove Theorem \ref{tight-path} using the proof technique of Theorem \ref{mainzigzag}.

\medskip

{\bf Notation.} We let $\vb{\Omega}_n$ denote a generic set of $n$ points in strictly convex position in the plane, and let $\prec$ denote a cyclic ordering of $\vb{\Omega}_n$. For $u,v \in \vb{\Omega}_n$, we write $[u,v] = \{w : u \prec w \prec v\}$; this is the set of vertices in the segment of $\vb{\Omega}_n$ from $u$ to $v$ (including $u$ and $v$) in the ordering $\prec$. For $u,v \in \vb{\Omega}_n$, let
$\ell(u,v) = \min\{|[u,v]| - 1,|[v,u]| - 1\}$. In other words,  $\ell(u,v)$ is the number of sides in a shortest segment of $\vb{\Omega}_n$ between $u$ and $v$.
Throughout this paper, cghs have vertex set in $\vb{\Omega}_n$ with cyclic ordering $\prec$.  For an $r$-uniform cgh $F$, let $\ex_{\circlearrowright}(n,F)$ denote the maximum number of edges in an $r$-uniform cgh on $\vb{\Omega}_n$ that does not contain an ordered substructure isomorphic to $F$. We write $V(H)$ for the vertex set of a hypergraph $H$,
and represent the edges as unordered lists of vertices. We identify a hypergraph $H$ with its edge-set, denoting by $|H|$ the number of edges in $H$.
For $v \in V(H)$, the {\em neighborhood of $v$} is $N(v) = \bigcup_{v \in e \in H} (e \backslash \{v\})$.
Let $\partial H$ denote the {\em shadow} of an $r$-graph $H$, namely $\{e \backslash \{x\} : e \in H, x \in V(H)\}$.

\section{Extending zigzags}\label{extend}

\subsection{Extending zigzags in graphs}

We start with a short proof of Theorem \ref{perlesthm} for zigzags of odd length, along the lines of Perles' proof, which gives an idea of the proof of Theorem \ref{mainzigzag}.

\begin{prop}
Let $k \geq 0$. If $G$ is an $n$-vertex cgg with no $(2k+1)$-zigzag, then $|G|  \leq kn$.
\end{prop}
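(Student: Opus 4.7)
The plan is to argue by induction on $n$, reducing quickly to the case $\delta(G) \geq k+1$ and then producing a $(2k+1)$-zigzag via a maximality argument in the spirit of Erd\H{o}s--Gallai. The base case $n \leq 2k+1$ is immediate from $\binom{n}{2} \leq kn$ in that range.

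For the inductive step, if some $v \in V(G)$ has $\deg_G(v) \leq k$, I would delete $v$ and apply induction to $G - v$ to obtain $|G| \leq k(n-1) + k = kn$. So I may assume $\delta(G) \geq k+1$, and it suffices to exhibit a $(2k+1)$-zigzag. Let $P = v_0 v_1 \ldots v_m$ be a zigzag of maximum length in $G$ with witness segments $I_0 \prec I_1$, and suppose for contradiction that $m \leq 2k$. Assume (WLOG) that $m$ is even, so $v_m \in I_0$ is the cyclic-rightmost used vertex in $I_0$ and $v_{m-1} \in I_1$ is the cyclic-leftmost used vertex in $I_1$. By maximality of $P$, no neighbor $w$ of $v_m$ in $I_1 \setminus V(P)$ with $w \prec v_{m-1}$ can exist, since such a $w$ would serve as $v_{m+1}$ in an $(m+1)$-zigzag; a symmetric restriction applies at the other endpoint $v_0$.

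The hard part will be pushing this forbidden-neighbor analysis far enough to deduce $m \geq 2k+1$ from the bound $\deg(v_m) \geq k+1$. A single extension step on its own only yields $m \geq k+1$, which is half the target, so I would close the gap via a P\'osa-style rotation: if $v_i$ is a neighbor of $v_m$ with $i < m-1$ (and $i$ of the correct parity), then reversing the subpath $v_{i+1} \ldots v_m$ produces another length-$m$ zigzag with a different endpoint, to which the same forbidden-neighbor analysis applies. Iterating should generate enough distinct endpoints of length-$m$ zigzags to double the count and reach $m \geq 2k+1$. The delicate step is verifying that each rotated path remains a genuine zigzag in the sense of Definition~\ref{defz}, i.e.\ that conditions (i) and (ii) can be re-established after possibly re-selecting the witness segments $I_0$ and $I_1$; this is where the convex geometric structure, together with the freedom in choosing $I_0$ and $I_1$, plays an essential role and points the way to the extension machinery developed in Section~\ref{extend} for the $r$-uniform generalization.
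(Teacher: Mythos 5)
Your plan has a fatal flaw at the step ``$\delta(G)\ge k+1$ and no $(2k+1)$-zigzag implies a contradiction.'' This implication is false. Take $n=m(2k+1)$ and let $G$ be the disjoint union of $m$ cliques $K_{2k+1}$, each occupying a contiguous arc of $\vb{\Omega}_n$. Then $\delta(G)=2k\ge k+1$, yet $G$ contains no path with $2k+1$ edges at all (such a path needs $2k+2$ vertices, but every path stays inside a single clique of only $2k+1$ vertices), hence certainly no $(2k+1)$-zigzag. Moreover $|G|=m\binom{2k+1}{2}=kn$ exactly, so this is the extremal configuration. No amount of P\'osa rotation will conjure a $(2k+1)$-zigzag here because there is none; the rotation machinery you sketch would simply stall inside a clique. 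This is the same reason the analogous ``reduce to high minimum degree, then find the path by maximality'' shortcut does not work for the Erd\H{o}s--Gallai theorem either: the extremal examples for $P_k$ are disjoint $K_k$'s, which have high minimum degree and no $P_k$, so the correct proofs handle this case by component-wise or cycle-based arguments rather than by forcing a long path from a degree condition alone.

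The paper's proof avoids this entirely and is quite different: it inducts on $k$, not on $n$, and never reduces to a minimum-degree hypothesis. For each vertex $v$ one takes the edge $vf(v)$ where $f(v)$ is the first neighbor of $v$ clockwise after $v$; the set $E=\{vf(v):v\in V(G)\}$ has at most $n$ edges, and the key observation is that $G\setminus E$ has no $(2k-1)$-zigzag, since any $(2k-1)$-zigzag $v_0\cdots v_{2k-1}$ in $G\setminus E$ could be extended on both ends to $f(v_0)v_0\cdots v_{2k-1}f(v_{2k-1})$, a $(2k+1)$-zigzag in $G$. Induction then gives $|G|\le (k-1)n+n=kn$. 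Note how this argument handles the disjoint-cliques example gracefully: removing one clockwise edge per vertex strips out enough to drop the zigzag length by two, without ever needing to produce a long zigzag that may not exist. If you want to repair your approach along Erd\H{o}s--Gallai lines you would have to first pass to connected components (or to a more refined decomposition) and account for the cliques explicitly, but the paper's two-ended extension trick is both shorter and, as Section~\ref{extend} shows, the one that scales to $r$-uniform cghs.
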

\vspace{-0.15in}
\begin{proof} Proceed by induction on $k$; for $k = 0$, the statement is clear. Suppose $k \geq 1$ and $G$ is an $n$-vertex cgg with no $(2k + 1)$-zigzag. For $v \in V(G)$, let
$f(v)$ be the first vertex of $N(v)$ after $v$ in the ordering $\prec$. Let $E = \{vf(v) : v \in V(G)\}$. If $v_0 v_1 \dots v_{2k-1}$ is a $(2k - 1)$-zigzag in $F = G \backslash E$, then $f(v_0) v_0 \dots v_{2k - 1} f(v_{2k - 1})$ is a $(2k + 1)$-zigzag in $G$. So $F$ has no $(2k - 1)$-zigzag, and $|F| \leq (k - 1)n$ by induction. Since $|E| \leq n$, $|G| = |F| + |E| \leq kn$.
\end{proof}

A key point is that a zigzag $v_0 v_1 \dots v_k$ can be extended to a $(k + 1)$-zigzag $v_0 v_1 \dots v_k v$ if $v$ is adjacent to $v_k$ and $v \in [v_k,v_{k-1}]$ if $k$ is even, whereas $v \in [v_{k-1},v_k]$ if $k$ is odd (the reader may find it helpful to
refer to Figure 1). In the next section, we generalize these ideas to uniform cghs.

\subsection{Extending zigzags in hypergraphs}

Fixing $r \geq 2$, we write $\vb{v}_k$ as shorthand for $(v_{k-1},v_k,\dots,v_{k+ r - 2})$. We use this as notation for the ordering of the last edge of
a $k$-zigzag:

\begin{definition}
The {\em end} of a $k$-zigzag $v_0 v_1 \dots v_{k + r- 2}$ is $\vb{v}_k = (v_{k-1},v_k,\dots,v_{k+r-2})$. Let $I(\vb{v}_k) = [v_{k - 1},v_k]$ if $k$ is odd and $I(\vb{v}_k) = [v_{k + r - 2},v_{k - 1}]$ if $k$ is even, and
\[X(\vb{v}_k) = \{v \in I(\vb{v}_k) : vv_kv_{k+1}\dots v_{k+r-2} \in H\}\]
\end{definition}
Referring to Figure 1, in the picture on the left $X(\vb{v}_6)$ is the set of $v$ in the segment from $v_6$ to $v_5$ clockwise such that $v_6v$ is an edge. In the picture
on the right, $X(\vb{v}_5)$ is the set of $v$ in the segment from $v_4$ to $v_5$ clockwise such that $v_5 v_6 v_7 v$ is an edge.
In the next proposition, we see that any vertex in $X(\vb{v}_k)$ can be used to ``extend'' a $k$-zigzag ending in $\vb{v}_k$ to a $(k + 1)$-zigzag:

\begin{prop} \label{obs}
Let $\vb{v}_k \in V(H)^r$ be the end of a $k$-zigzag $\mathsf P$ in $H$. Then for any $v_{k + r - 1} \in X(\vb{v})$,
$\mathsf P \cup \{v_kv_{k+1}\dots v_{k+r-1}\}$ is a $(k + 1)$-zigzag ending in $\vb{v}_{k+1}$.
\end{prop}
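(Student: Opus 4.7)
The plan is to verify two things: that $\mathsf P \cup \{v_k v_{k+1} \cdots v_{k+r-1}\}$ is a tight $(k+1)$-path on distinct vertices, and that it carries a valid zigzag structure. First I would observe that the tight-path property is essentially immediate from the definition of $X(\vb{v}_k)$: by assumption, $\{v_k, v_{k+1}, \ldots, v_{k+r-1}\}$ is an edge of $H$, which combined with the edges of $\mathsf P$ supplies the $k+1$ consecutive $r$-sets required. Common membership in this edge also forces $v_{k+r-1} \ne v_k, \ldots, v_{k+r-2}$, and distinctness from $v_0, \ldots, v_{k-1}$ will fall out of the segment analysis below.

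For the zigzag structure, let $I_0 \prec I_1 \prec \cdots \prec I_{r-1}$ be the segments witnessing that $\mathsf P$ is a $k$-zigzag, chosen minimally (each $I_j$ is the shortest segment containing all path vertices of index $\equiv j \pmod{r}$). Since $k+r-1 \equiv k-1 \pmod{r}$, the new vertex $v_{k+r-1}$ must be absorbed into $I_{(k-1) \bmod r}$, and every other segment can be left untouched. The key observation is that $v_{k-1}$ is extremal in $I_{(k-1) \bmod r}$ in precisely the direction that $I(\vb{v}_k)$ points away from it: if $k$ is odd then $(k-1) \bmod r$ is even, rule~(i) of Definition~\ref{defz} makes $v_{k-1}$ the last vertex of $I_{(k-1) \bmod r}$ in $\prec$, and $I(\vb{v}_k) = [v_{k-1}, v_k]$ stretches forward into the gap before $I_{k \bmod r}$; if $k$ is even, rule~(ii) makes $v_{k-1}$ the first vertex of $I_{(k-1) \bmod r}$, and $I(\vb{v}_k) = [v_{k+r-2}, v_{k-1}]$ stretches backward into the gap after $I_{(k-2) \bmod r}$.

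In either case, since $v_{k+r-1} \in X(\vb{v}_k) \subseteq I(\vb{v}_k)$ and $v_{k+r-1}$ differs from the endpoint of $I(\vb{v}_k)$ other than $v_{k-1}$ (sharing the new edge with that endpoint), one can push the appropriate end of $I_{(k-1) \bmod r}$ just far enough to absorb $v_{k+r-1}$ while staying within the gap. The resulting segments $I_0', \ldots, I_{r-1}'$ remain pairwise disjoint, preserve the cyclic ordering $I_0' \prec \cdots \prec I_{r-1}'$, and extend the $\prec$-monotone chain inside $I'_{(k-1) \bmod r}$ by one vertex (either $v_{k-1} \prec v_{k+r-1}$ when $k$ is odd or $v_{k+r-1} \prec v_{k-1}$ when $k$ is even), matching the parity of $(k-1) \bmod r$ required by Definition~\ref{defz}. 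The end of the new zigzag is then $(v_k, v_{k+1}, \ldots, v_{k+r-1}) = \vb{v}_{k+1}$.

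The main obstacle here is only bookkeeping — tracking which parity pairs with which extension direction and verifying the three conditions (disjointness, cyclic ordering, and intra-segment monotonicity) in each case — and the whole argument reduces to the fact that the definition of $I(\vb{v}_k)$ was engineered so that any vertex of $X(\vb{v}_k)$, carrying the correct residue $\equiv k-1 \pmod r$, slots into the $(k+1)$-zigzag rules seamlessly by a one-segment modification of the previous witnessing segments.
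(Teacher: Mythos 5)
Your proposal is correct and takes essentially the same route as the paper: you identify the segment $I_j$ with $j \equiv k-1 \pmod r$, observe by the parity rules of Definition~\ref{defz} that $v_{k-1}$ sits at the endpoint of $I_j$ facing the gap $I(\vb{v}_k)$, and absorb $v_{k+r-1}$ into $I_j$ on that side. The only cosmetic difference is that you explicitly take the witnessing segments minimal so that $v_{k-1}$ is literally extremal, whereas the paper leaves this step implicit; the mathematics is the same.
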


\begin{prf} Let $\mathsf P = v_0 v_1 \dots v_{k + r - 2}$ and let $I_0 \prec I_1 \prec \dots \prec I_{r - 1}$ be the segments in Definition \ref{defz}.
Let $v_{k - 1} \in I_j$, so $j \equiv k - 1 \Mod{r}$. If $k$ is odd, then $j$ is even, and the vertices of $I_j \cup I_{j+1}$ appear in the order $v_j \prec \cdots  \prec v_{k-1} \prec  v_k \prec \cdots \prec v_{j+1}$ by Definition \ref{defz}(i). Then for any $v_{k + r - 1} \in X(\vb{v}_k)$, $e = v_kv_{k + 1}\dots  v_{k + r - 2}v_{k+r-1} \in H$ and adding $e$ to $\mathsf P$ and $v_{k + r - 1}$ to $I_j$ before $v_{k - 1}$ in the clockwise orientation, we obtain a $(k + 1)$-zigzag. Similarly, if $k$ is even, then $j$ is odd so the vertices of $I_{j-1} \cup I_{j}$ appear in the order $v_{j-1} \prec \cdots \prec v_{k+r-2} \prec v_{k-1} \prec \cdots \prec v_{j}$ by Definition \ref{defz}(ii), and we add $e$ to $\mathsf P$ and $v_{k + r - 1}$ after $v_{k - 1}$ in $I_j$ in the clockwise orientation.
\end{prf}

\medskip

\begin{definition} Let $S_k(H)$ be the set of ends $\vb{v}_k \in V(H)^r$ of $k$-zigzags in $H$, and
\[
T_k(H) = \{\vb{v}_k \in S_k(H) : X(\vb{v}_k) = \emptyset\}.
\]
\end{definition}

Informally, $T_k(H)$ is the set of ends of $k$-zigzags which cannot be ``extended'' to $(k + 1)$-zigzags. The two key propositions for the proof of Theorem \ref{mainzigzag}
are as follows.

\medskip

\begin{prop}\label{injection}
For $\vb{v}_k \in S_k(H)$, let $v_{k+r-1} \in X(\vb{v}_k)$ be as close as possible to $v_{k-1}$ in the segment $I(\vb{v}_k)$. Then $f(\vb{v}_k) = \vb{v}_{k+1}$ is an injection from $S_k(H) \backslash T_k(H)$ to $S_{k+1}(H)$. In particular,
\begin{equation}\label{skbound1}
|S_{k + 1}(H)| \geq |S_k(H) \backslash T_k(H)|.
\end{equation}
\end{prop}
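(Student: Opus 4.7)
My plan is to verify that $f$ maps into $S_{k+1}(H)$ (which is immediate from Proposition~\ref{obs}) and then establish injectivity by a segment-intersection argument. For $\vb{v}_k\in S_k(H)\setminus T_k(H)$ we have $X(\vb{v}_k)\neq\emptyset$, so the vertex $v_{k+r-1}$ exists and Proposition~\ref{obs} gives $\vb{v}_{k+1}=(v_k,\dots,v_{k+r-1})\in S_{k+1}(H)$; hence $f$ is well-defined.

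For injectivity, suppose $f(\vb{u}_k)=f(\vb{v}_k)=(w_k,\dots,w_{k+r-1})$. Matching coordinates yields $u_i=v_i=w_i$ for $k\le i\le k+r-1$, so $\vb{u}_k$ and $\vb{v}_k$ agree on every coordinate except possibly the first; it remains to show $u_{k-1}=v_{k-1}$. Assume to the contrary that $u_{k-1}\neq v_{k-1}$ and, for concreteness, that $k$ is odd (the case $k$ even is handled symmetrically, using $I(\vb{v}_k)=[v_{k+r-2},v_{k-1}]$). Then $I(\vb{v}_k)=[v_{k-1},w_k]$ and $I(\vb{u}_k)=[u_{k-1},w_k]$, and since $u_{k-1},v_{k-1},w_k$ are three distinct vertices of $\vb{\Omega}_n$, after possibly swapping $\vb{u}_k\leftrightarrow \vb{v}_k$ we may assume $u_{k-1}$ lies strictly between $v_{k-1}$ and $w_k$ in $\prec$-order; that is, $u_{k-1}\in I(\vb{v}_k)\setminus\{v_{k-1},w_k\}$.

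The key step is that $u_{k-1}\in X(\vb{v}_k)$: the required edge $u_{k-1}w_k w_{k+1}\cdots w_{k+r-2}$ is literally $u_{k-1}u_k u_{k+1}\cdots u_{k+r-2}$, i.e.\ the last edge of the $k$-zigzag ending in $\vb{u}_k$. The ``closest to $v_{k-1}$'' rule therefore forces $v_{k+r-1}$ to lie between $v_{k-1}$ and $u_{k-1}$ along $I(\vb{v}_k)$, while the equality $v_{k+r-1}=u_{k+r-1}\in I(\vb{u}_k)=[u_{k-1},w_k]$ places $v_{k+r-1}$ at or beyond $u_{k-1}$ in that same $\prec$-direction. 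The only vertex compatible with both constraints is $u_{k-1}$ itself, so $v_{k+r-1}=u_{k+r-1}=u_{k-1}$. But then the extended sequence $u_0u_1\cdots u_{k+r-2}u_{k+r-1}$ underlying $f(\vb{u}_k)$ has the vertex $u_{k-1}$ repeated, contradicting that $f(\vb{u}_k)\in S_{k+1}(H)$ is the end of a bona fide tight $(k+1)$-path.

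The main obstacle is the ``WLOG swap'' justification: if instead $u_{k-1}$ falls outside $I(\vb{v}_k)$, the cyclic order forces $v_{k-1}\in I(\vb{u}_k)\setminus\{u_{k-1},w_k\}$, and the symmetric argument produces $v_{k+r-1}=v_{k-1}$, which is a repeated vertex in the $(k+1)$-zigzag extending $\vb{v}_k$. Either configuration yields a contradiction, so $u_{k-1}=v_{k-1}$, $f$ is an injection, and \eqref{skbound1} follows at once.
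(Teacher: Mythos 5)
Your proposal is correct and takes essentially the same route as the paper: identify that $u_{k-1}\in X(\vb{v}_k)$ (because the last edge of the zigzag ending in $\vb{u}_k$ is exactly $u_{k-1}u_k\cdots u_{k+r-2}=u_{k-1}v_k\cdots v_{k+r-2}$), and contradict the ``closest to $v_{k-1}$'' choice of $v_{k+r-1}$, with a symmetric case if the roles of $\vb{u}_k$ and $\vb{v}_k$ are swapped. The only difference is in how you treat the boundary case $v_{k+r-1}=u_{k-1}$: the paper's implicit reading of $I(\vb{v}_k)=[v_{k-1},v_k]$ is the segment with the end $v_{k-1}$ excluded (otherwise $v_{k-1}\in X(\vb{v}_k)$ would always hold and $T_k(H)$ would be empty), under which $u_{k+r-1}\in I(\vb{u}_k)$ already gives $u_{k+r-1}\neq u_{k-1}$ and the contradiction is immediate; you instead allow $v_{k+r-1}=u_{k-1}$ as a provisional possibility and dismiss it via a repeated-vertex argument, which is fine but appeals to the conclusion of Proposition~\ref{obs} (that the extension is a genuine tight path) as the source of the contradiction rather than directly to the definition of $I(\cdot)$ --- a slightly more roundabout phrasing of the same fact.
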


\begin{prf}
By Proposition \ref{obs}, $f(\vb{v}_k) \in S_{k + 1}(H)$. Furthermore, $f(\vb{v}_k) = f(\vb{w}_k)$ implies
$\vb{v}_{k + 1} = \vb{w}_{k + 1}$, which gives $v_i = w_i$ for $k \leq i \leq k + r - 1$. If $v_{k - 1} \neq w_{k - 1}$,
then either $w_{k - 1}$ is closer to $v_{k - 1}$ than $w_{k + r - 1}$ in $I(\vb{v}_k)$, or $v_{k - 1}$ is closer to $w_{k - 1}$ than $v_{k + r - 1}$
in $I(\vb{v}_k)$. These contradictions imply $v_{k - 1} = w_{k - 1}$, and so $\vb{v}_k = \vb{w}_k$ and $f$ is an injection.
 \end{prf}

\smallskip

\begin{prop}\label{injection2}
For $\vb{v}_k \in T_k(H)$, the map $g(\vb{v}_k) = (v_k,v_{k + 1},\dots,v_{k + r - 2})$ is an injection from $T_k(H)$ to cyclically ordered elements of $\partial H$. In particular,
\begin{equation}\label{tkbound1}
|T_k(H)| \leq (r - 1)|\partial H|.
\end{equation}
\end{prop}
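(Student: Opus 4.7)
My plan is to prove Proposition~\ref{injection2} in two parts: first, a bookkeeping check that $g(\vb{v}_k)$ really is a cyclically ordered tuple from $\partial H$, so that the counting bound will follow from injectivity; and second, the substantive injectivity argument.

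For the bookkeeping part, since $\vb{v}_k$ ends a $k$-zigzag, the edge $\{v_{k-1},v_k,\ldots,v_{k+r-2}\}$ belongs to $H$, so $\{v_k,\ldots,v_{k+r-2}\} \in \partial H$. Moreover, by Definition~\ref{defz} the vertex $v_i$ lies in segment $I_{i \bmod r}$, and the indices $k, k+1, \ldots, k+r-2$ reduce modulo $r$ to the $r-1$ consecutive residues missing only $(k-1) \bmod r$, so the tuple $(v_k, v_{k+1}, \ldots, v_{k+r-2})$ lists its vertices in the cyclic order $\prec$ starting from $v_k$. Since each unordered $(r-1)$-set in $\partial H$ admits exactly $r-1$ such cyclic orderings (one per choice of starting vertex), injectivity of $g$ will immediately give $|T_k(H)| \le (r-1)|\partial H|$.

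For injectivity, I assume $g(\vb{v}_k) = g(\vb{w}_k)$ with both tuples in $T_k(H)$, so $v_i = w_i$ for $k \le i \le k+r-2$, and suppose for contradiction that $v_{k-1} \ne w_{k-1}$. The pivotal remark is that the edge $\{w_{k-1}, v_k, \ldots, v_{k+r-2}\}$ lies in $H$ (it is the last edge of the zigzag ending in $\vb{w}_k$): as soon as we show $w_{k-1} \in I(\vb{v}_k)$ we conclude $w_{k-1} \in X(\vb{v}_k)$, contradicting $\vb{v}_k \in T_k(H)$; symmetrically with the roles of $\vb{v}_k$ and $\vb{w}_k$ swapped. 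I then split on the parity of $k$. When $k$ is odd, the arcs $I(\vb{v}_k) = [v_{k-1}, v_k]$ and $I(\vb{w}_k) = [w_{k-1}, v_k]$ share the endpoint $v_k$, so whichever of $v_{k-1}, w_{k-1}$ is farther back from $v_k$ in the ordering $\prec$ contains the other point in its arc. When $k$ is even the arcs $[v_{k+r-2}, v_{k-1}]$ and $[v_{k+r-2}, w_{k-1}]$ share the starting point $v_{k+r-2}$, and the same nested-arc comparison applies.

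I do not anticipate any serious obstacle: the injectivity argument is essentially the same mechanism already exploited in Proposition~\ref{injection}, namely that membership in $T_k(H)$ forbids any valid backward extension into $I(\vb{v}_k)$. The only point requiring care is aligning the endpoints of $I(\vb{v}_k)$ and $I(\vb{w}_k)$ so the nesting comparison is unambiguous, and the parity split on $k$ does exactly this.
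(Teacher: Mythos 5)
Your proof is correct and follows essentially the same route as the paper's: view $w_{k-1}$ as a candidate extension vertex for $\vb{v}_k$ (and vice versa) via the shared suffix $v_k,\ldots,v_{k+r-2}$, and derive a contradiction with membership in $T_k(H)$ by showing one of $v_{k-1},w_{k-1}$ falls inside the other's arc $I(\cdot)$. Your explicit parity split is in fact slightly more careful than the published proof, whose stated case assignments (``$v_{k-1}\in X(\vb{w}_k)$'' vs.\ ``$w_{k-1}\in X(\vb{v}_k)$'') are literally correct only for $k$ odd, though the conclusion holds either way.
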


\begin{prf}
If $g(\vb{v}_k) = g(\vb{w}_k)$, then $w_i = v_i$ for $k \leq i \leq k + r - 2$. Suppose $v_{k - 1} \neq w_{k - 1}$. Then either
$v_{k+r-2} \prec w_{k-1} \prec v_{k-1}$, and $v_{k-1} \in X(\vb{w}_k)$, or $v_{k-1} \prec w_{k-1} \prec v_k$, and $w_{k-1} \in X(\vb{v}_k)$. In either case,
$\vb{v}_k \not \in T_k$ or $\vb{w}_k \not \in T_k$, a contradiction. So $v_{k - 1} = w_{k - 1}$, which implies $\vb{v}_k = \vb{w}_k$.
 \end{prf}
 
 \bigskip

\section{Proof of Theorem \ref{mainzigzag} on zigzags}\label{even}

The following theorem implies Theorem \ref{mainzigzag}, since if $H$ is an $n$-vertex $r$-uniform cgh not containing
a $k$-zigzag, then $S_k(H) = \emptyset$, and we always have $|\partial H| \leq {n \choose r - 1}$.

\begin{thm}\label{zigzag}
Let $k \geq 1$ and let $r \geq 2$ be even. Then for any $r$-uniform cgh $H$,
\begin{equation}\label{skbound}
|S_k(H)| \geq r|H| - (r - 1)(k - 1)|\partial H|.
\end{equation}
\end{thm}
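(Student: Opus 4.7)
I will prove Theorem~\ref{zigzag} by induction on $k$, using the two injections already established in Propositions~\ref{injection} and~\ref{injection2}. The inequality has exactly the right shape for induction: the correction term grows by $(r-1)|\partial H|$ each time $k$ is incremented by $1$, which is precisely the upper bound on $|T_k(H)|$ supplied by Proposition~\ref{injection2}.

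\textbf{Base case $k=1$.} A $1$-zigzag is a single edge $v_0 v_1 \dots v_{r-1}$ equipped with a choice of disjoint segments $I_0 \prec I_1 \prec \dots \prec I_{r-1}$ containing the vertices in order. Since each of these segments contains exactly one vertex of the edge, conditions (i) and (ii) of Definition~\ref{defz} are vacuous. Thus, for a fixed edge $e$ of $H$, the ends $\vb{v}_1 = (v_0, v_1, \dots, v_{r-1})$ of $1$-zigzags supported on $e$ are exactly the $r$ cyclic rotations of $e$ (one for each choice of the starting vertex in the cyclic order $\prec$). Hence $|S_1(H)| = r|H|$, matching the bound $r|H| - (r-1)(0)|\partial H|$ with equality.

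\textbf{Inductive step.} Assume $|S_k(H)| \geq r|H| - (r-1)(k-1)|\partial H|$. By Proposition~\ref{injection}, the map $f$ is an injection from $S_k(H) \setminus T_k(H)$ into $S_{k+1}(H)$, so
\[
|S_{k+1}(H)| \;\geq\; |S_k(H) \setminus T_k(H)| \;=\; |S_k(H)| - |T_k(H)|,
\]
where we used $T_k(H) \subseteq S_k(H)$. Proposition~\ref{injection2} gives $|T_k(H)| \leq (r-1)|\partial H|$, and combining this with the induction hypothesis yields
\[
|S_{k+1}(H)| \;\geq\; r|H| - (r-1)(k-1)|\partial H| - (r-1)|\partial H| \;=\; r|H| - (r-1)k|\partial H|,
\]
completing the induction.

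\textbf{Where the real work lives.} The theorem itself is a short consequence of the two propositions already proved in Section~\ref{extend}, so the only point requiring care in the writeup is the base case, specifically verifying that each edge of $H$ genuinely contributes $r$ distinct ends to $S_1(H)$ (i.e.\ that the segments $I_0 \prec \dots \prec I_{r-1}$ can always be chosen for any cyclic rotation of the edge, which is immediate since convex position makes the cyclic order of the vertices canonical). All of the substantive geometric content, namely the zigzag-extension mechanism and the control on unextendable ends, has been packaged into Propositions~\ref{injection} and~\ref{injection2}; the theorem is then simply the telescoping of the single-step bound.
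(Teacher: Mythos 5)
Your proof is correct and follows essentially the same route as the paper's: induction on $k$, with the base case $k=1$ counting the $r$ cyclic rotations of each edge and the inductive step chaining together the two injections from Propositions~\ref{injection} and~\ref{injection2}. No gaps.
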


\begin{prf} We prove (\ref{skbound}) by induction on $k$. Let $k = 1$ and $e \in H$. By Definition \ref{defz}(i), there are $r$ possible orderings
of the vertices of $e$ giving a 1-zigzag:  having chosen the first vertex, the ordering of the remaining vertices of $e$ is determined. Therefore $|S_1(H)| \geq r|H|$. For the induction step, suppose $k \geq 1$ and (\ref{skbound}) holds.
By (\ref{skbound1}) and (\ref{tkbound1}),
\begin{eqnarray*}
|S_{k+1}(H)| &\geq& |S_k(H) \backslash T_k(H)| \geq r|H| - (r - 1)(k - 1)|\partial H| - |T_k(H)| \\
&\geq& r|H| - (r - 1)k|\partial H|.
\end{eqnarray*}
This proves (\ref{skbound}).
\end{prf}

\bigskip

\section{Stack-free constructions}\label{lb}

 Let $k \geq 1$ and let $r \geq 2$ be even. A {\em $k$-stack}, denoted $\mathsf M_k^r$, consists of edges $\{v_{ir},v_{ir+1},\dots,v_{ir+r-1} : 0 \leq i < k\}$ where $v_0 v_1 \dots v_{(k - 1)r-1}$ is an $r$-uniform zigzag path; in other words we pick every $r$th edge from a zigzag path $\mathsf P_{(k-1)r + 1}^r$.
 An example for $r = 4$ and $k = 7$ is shown below, where the extreme points on the perimeter form $\vb{\Omega}_{28}$.

\begin{figure}[!ht]
\begin{center}
\includegraphics[width=2.5in]{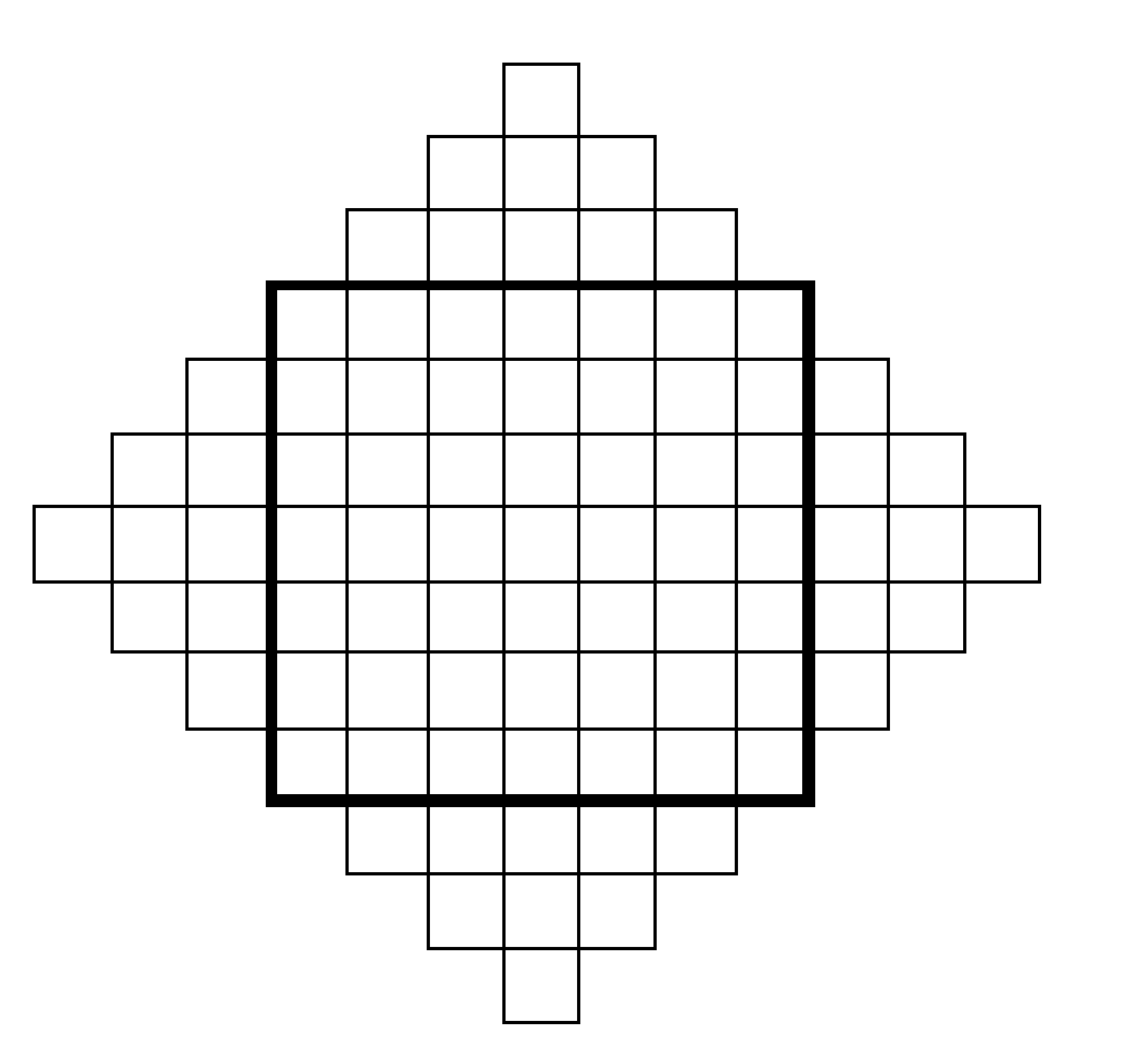}
\begin{center}
$\mathsf M_7^4$
\end{center}

\caption{Stack}
\label{fig:stacks}

\end{center}
\end{figure}

There is a simple construction of an $r$-uniform cgh with no $k$-stack when $k$ is odd with $(k - 1)(r - 1){n \choose r - 1} + O(n^{r - 2})$ edges.
If $k \geq 3$ is odd, let $H$ be the cgh consisting of $r$-sets $e$ from $\vb{\Omega}_n$ such that $\ell(u,v) \leq k - 1$ for all $u,v \in e$. It is straightforward to see that $|H| = (r - 1)(k - 1){n \choose r - 1} + O(n^{r - 1})$, and
$H$ contains no $k$-stack since the ``middle'' edge $e$ in the stack -- drawn in bold in Figure 2 -- has $\ell(u,v) \geq k$ for all $u,v \in e$.

\medskip

In this section, we extend this construction to all values of $k$, thereby proving the following theorem, which may be of independent interest. In particular, this construction does not contain $\mathsf P_{(k - 1)r + 1}^r$, and shows Theorem \ref{mainzigzag} is asymptotically tight for zigzags of length $1 \Mod{r}$.

\medskip

\begin{thm}\label{stack}
Let $k \geq 1$ and $r \geq 2$ be even. Then
\[ \ex_{\circlearrowright}(n,\mathsf M_k^r) = (k - 1)(r - 1){n \choose r-1} + O(n^{r - 1}).\]
\end{thm}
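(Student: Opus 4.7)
My plan is to establish matching upper and asymptotic lower bounds. For the \emph{upper bound}, I invoke Theorem~\ref{mainzigzag}: the $k$-stack $\mathsf M_k^r$ is a sub-hypergraph of the zigzag path $\mathsf P_{(k-1)r+1}^r$ (since the stack is exactly every $r$-th edge of the zigzag of length $(k-1)r+1$), so any cgh avoiding $\mathsf M_k^r$ must also avoid $\mathsf P_{(k-1)r+1}^r$, giving
\[
\ex_{\circlearrowright}(n,\mathsf M_k^r) \le \ex_{\circlearrowright}(n, \mathsf P_{(k-1)r+1}^r) \le \frac{(k-1)r(r-1)}{r}\binom{n}{r-1} = (k-1)(r-1)\binom{n}{r-1}.
\]

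For the matching \emph{lower bound}, I would take $H$ on $\vb{\Omega}_n$ to consist of all $r$-subsets $e$ containing at least one ``short chord'', i.e.\ some pair $u,v \in e$ with $\ell(u,v) \le k-1$. (For $r=2$ this specializes to the standard extremal example of pairs of cyclically close vertices.) To show $H$ avoids $k$-stacks when $k$ is odd, I would argue that the middle edge $e_m = \{v_{mr},\ldots,v_{mr+r-1}\}$ with $m=(k-1)/2$ of any stack drawn from a zigzag $v_0 v_1 \cdots v_{kr-1}$ has all pairwise $\ell$-distances at least $k$. Indeed, by Definition~\ref{defz}, the vertex $v_{mr+j}$ sits at position $m$ or $k-1-m$ inside its segment $I_j$, depending on the parity of $j$, so the cyclic gap in $\vb{\Omega}_n$ between two consecutive vertices of $e_m$ contains exactly $2(k-1-m)=k-1$ or $2m=k-1$ intermediate zigzag vertices; hence each polygon $\ell \ge k$, so $e_m \notin H$ and the stack is absent from $H$.

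To count $|H|$, each vertex of $\vb{\Omega}_n$ has $2(k-1)$ neighbors at $\ell$-distance $\le k-1$, producing $n(k-1)$ short pairs. Each short pair is contained in $\binom{n-2}{r-2}$ $r$-subsets, and the overcount from $r$-subsets containing two or more short pairs is $O(n^{r-2})$, since any such $r$-set is determined by at most four specified vertices plus $r-4$ free ones. Thus
\[
|H| = n(k-1)\binom{n-2}{r-2} - O(n^{r-2}) = (k-1)(r-1)\binom{n}{r-1} + O(n^{r-2}),
\]
matching the upper bound. The main obstacle I anticipate is the middle-edge verification: translating the combinatorial indices of Definition~\ref{defz} into cyclic positions in $\vb{\Omega}_n$ and carefully counting intermediate zigzag vertices with attention to parity. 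A secondary subtlety is the $k$ even case, where $(k-1)/2$ is not an integer and the analogous computation on either near-middle edge $e_{(k-2)/2}$, $e_{k/2}$ produces a polygon edge with $\ell$ exactly $k-1$; this can be absorbed by a small modification of $H$ (e.g.\ deleting a lower-order set of ``hazardous'' edges that hits every embedded stack) without affecting the leading asymptotic constant.
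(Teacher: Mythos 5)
Your upper bound and your sketch for $k$ odd are both correct and follow the paper. The upper bound via $\mathsf M_k^r \subseteq \mathsf P_{(k-1)r+1}^r$ and Theorem~\ref{mainzigzag} is exactly the paper's argument. For $k$ odd, taking $H$ to be all $r$-sets with at least one pair at $\ell$-distance $\le k-1$ (equivalently, a short \emph{consecutive} pair of the $r$-gon) and showing that the middle edge $e_{(k-1)/2}$ of any stack has every consecutive side at $\ell \ge k$ is the same idea the paper describes, and your index computation for $m = (k-1)/2$ (each short arc between consecutive vertices of $e_m$ contains $k-1$ intermediate zigzag vertices, regardless of parity) is accurate.

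The genuine gap is the $k$ even case, and your proposed fix does not work. You suggest that the near-middle edges $e_{k/2-1}$ or $e_{k/2}$ produce sides with $\ell$ ``exactly $k-1$'' and that one can delete a \emph{lower-order} set of hazardous edges hitting every embedded stack. But the set of $r$-sets having some side at $\ell$-distance exactly $k-1$ is $\Theta(n^{r-1})$ --- the same order as the main term --- not $O(n^{r-2})$, so a minimum hitting set of ``hazardous'' middle edges need not be negligible, and you give no argument that it is. Moreover, ``$\ell \ge k-1$'' does not mean ``$\ell = k-1$'': the near-middle edges can have all sides of length $\ge k$, so a hazardous-edge set defined by a single distance threshold will not obviously hit every stack. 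The paper handles all $k$ by a genuinely different construction $H = H_0 \cup H_1 \cup \cdots \cup H_{k-1}$: $H_0$ is the set of \emph{all} edges containing a fixed vertex $0$ (contributing $\binom{n-1}{r-1}$, the same order as the main term, to the count), $H_j$ for $1 \le j \le k-2$ are the edges avoiding $0$ with some side of length exactly $j$, and $H_{k-1}$ consists of edges avoiding $0$ with an \emph{odd-indexed} side of length $k-1$ or $k$. The stack-free argument then looks at the two middle edges $e,f$, observes that at least one of them avoids vertex $0$, and shows that this edge has every odd-indexed side of length $\ge k+1$, so it cannot lie in any $H_j$. This asymmetry (fixed vertex $0$, odd-indexed sides only for the critical distances $k-1,k$) is what makes the construction work for $k$ even, and there is no visible way to recover it as a ``small modification'' of your simple $H$.
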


\begin{proof} We have $\ex_{\circlearrowright}(n,\mathsf M_k^r) \leq (k -1)(r - 1){n \choose r - 1}$ from Theorem \ref{mainzigzag}. The main part of the proof is the construction of an $r$-uniform cgh with
$(k - 1)(r - 1){n \choose r - 1} + O(n^{r - 2})$ edges that does not contain a $k$-stack. It will be convenient to let $\vb{\Omega}_n = \{0,1,2,\dots,n-1\}$ in cyclic order, and view our edges as ordered $r$-tuples $(v_{0},v_{1},\ldots,v_{r-1})$ where
$0\leq v_0 < v_1 < \ldots < v_{r-1} \leq n-1$.

\medskip

 Our construction $H = H(n,r,k)$ has the form $H=\bigcup_{j=0}^{k-1} H_j$, where

\begin{tabular}{cp{5.5in}}
(i) & $H_0=\{(v_0,v_{1},v_{2},\ldots,v_{r-1})\, : \; v_0 = 0\}$,\\
(ii) & $H_j = \bigcup_{h = 0}^{r-1} \{(v_0,v_1,\ldots,v_{r-1}) \not \in H_0 \, : \, \ell(v_h,v_{h+1}) = j\}$ for $1 \leq j \leq k - 2$, \\
(iii) & $H_{k-1}= \bigcup_{h = 1}^{r/2 - 1} \{(v_{0},v_{1},\ldots,v_{r-1}) \not \in H_0 \, : \; \ell(v_{2h - 1},v_{2h}) \in \{k-1,k\} \}$.
\end{tabular}

\medskip

{\bf Claim 1.}\label{size} $|H| = (k-1)(r-1){n\choose r-1} + O(n^{r - 2})$.

\smallskip

{\bf Proof.} By definition, $|H_0|={n-1\choose r-1}$, and $H_0 \cap \bigcup_{j=1}^{k-1} H_j = \emptyset$. For any $j : 1\leq j\leq k-2$, as $n \rightarrow \infty$,
\[ |H_j| = (n-1){n-j-1\choose r-2} + O(n^{r - 2}) =  (r-1){n\choose r-1} + O(n^{r - 2})\]
and also
\[ |H_{k-1}| = 2(r/2 - 1){n-1\choose r-1} + O(n^{r - 2}) = (r-2){n\choose r-1} + O(n^{r - 2}).\]

If $1 \leq i < j \leq k - 1$, $|H_i\cap H_j|=O(n^{r-2})$. By inclusion-exclusion,
\[ |H| \geq |H_0| + \sum_{j = 1}^{k - 1} |H_j| - \sum_{i < j} |H_i \cap H_j| = (k - 1)(r - 1){n \choose r - 1} + O(n^{r - 2}).\]
This proves the claim.\qed

\medskip

{\bf Claim 2.} {\em $\mathsf M_k^r \not \subseteq H$.}

\smallskip

{\bf Proof.} Suppose $H$ contains a $k$-stack. The key is to consider the ``middle'' two edges of the stack, say
$e$ and $f$. Then the vertex 0 is in at most one of $e$ and $f$. If $v_0$ is the first vertex of $e$ and $w_0$ is the first vertex
of $f$ after 0 in the clockwise direction, then without loss of generality we may assume $v_0 < w_0$. Now consider the pairs
$w_1 w_2$, $w_3 w_4$ up to $w_{r-1}w_{r}$ which are in $f$. We claim all of these pairs have length at least $k + 1$, contradicting the definition of $H$,
since $f$ would then not be a member of $H$. To see the claim, fix $h : 1 \leq h < r/2$. Notice that there are $k/2$ edges of the stack which contain a pair
of vertices in the segment $[w_{2h-1},w_{2h}]$, and these pairs are vertex disjoint.
However, then $\ell(w_{2h-1},w_{2h}) \geq 2(k/2 + 1) - 1 = k + 1$, and this holds for $1 \leq h < r/2$.
\end{proof}

%\medskip
%{\color{red} HAVENT CHECKED THIS}
%{\bf Construction 3: $\boldsymbol{r = 2 \Mod{4}}$.} We take Construction 2; the only difference when $r = 2 \Mod{4}$ is that $|C| \sim \frac{r-1}{r} \lfloor \frac{r}{4} \rfloor {n \choose r - 1}$, and this leads to
%$$\ex_{\c}(n, ZM_k^r) \geq (1 - o(1))(r-1)(k-1 - \tfrac{1}{2(r - 1)}){n \choose r-1}.$$

\section{Proof of Theorem \ref{tight-path} on tight paths}\label{proof-tightpath}

{\bf Proof for $r$ even.} Let $H$ be an $n$-vertex $r$-graph with no tight $k$-path, where $r$ is even. We aim to prove the following, which
gives Theorem \ref{tight-path} for $r$ even:

\begin{equation}\label{hbound}
|H| \leq \frac{k - 1}{2}|\partial H|.
\end{equation}

We follow the approach used to prove  Theorem \ref{mainzigzag} on a carefully chosen subgraph $G$ of $H$. This subgraph is defined via a random partition of $V(H)$:
let $s = r/2$ and let $\chi : V(G) \rightarrow \{0,1,\dots,s-1\}$ be a random $s$-coloring of the vertices of $H$ such that $\mathrm{P}(\chi(v) = i) = 1/s$ for $0 \leq i \leq s - 1$ and each vertex $v \in V(H)$,
and such that vertices are colored independently. Let $B_i = \{v \in V(H) : \chi(v) = i\}$, and define the following (random) subgraph of $H$:
\[ G = \{e \in H : |e \cap B_i| = 2 \mbox{ for }0 \leq i \leq s - 1\}.\]
In other words, each edge of $G$ has two vertices in each of the sets $B_i$.
For $0 \leq i \leq s - 1$, let $\partial_iG = \{e \in \partial G : |e \cap B_i| = 1\}$. Then we have the following expected values:
 \begin{equation}\label{tb}
\mathrm{E}(|G|) = \frac{r!}{2^s s^r} |H| \quad \mbox{ and } \quad \mathrm{E}(|\partial_i G|) = \frac{(r - 1)!}{2^{s-1}s^{r-1}}|\partial H|.
\end{equation}
The next step is to introduce some geometric structure on $G$. Let $\prec$ denote a cyclic ordering of the vertices of each of $B_0, B_1, \dots, B_{s - 1}$.

\begin{definition}[Good paths]\label{gp}
We call a tight path  $v_0v_1\ldots v_{k+r-2}$ in $G$ {\em good} if
\vspace{-0.1in}
\begin{center}
\begin{tabular}{lp{5.5in}}
{\rm (i)} & for $0 \leq j < k + r - 2$, $v_j,v_{j + 1} \in B_i$ whenever $j \equiv 2i \Mod{r}$.\\
{\rm (ii)} & the cyclic order in $B_i$ is always $v_j \prec v_{j+r} \prec v_{j+2r} \prec \ldots \prec v_{j+1+2r} \prec v_{j+1+r} \prec v_{j+1}$.
\end{tabular}
\end{center}
\end{definition}

An $r$-uniform good path with $k$ edges is shown in Figure 3, for $r = 6$ and $k = 4$.

\begin{figure}[!ht]
\begin{center}
\includegraphics[width=2.8in]{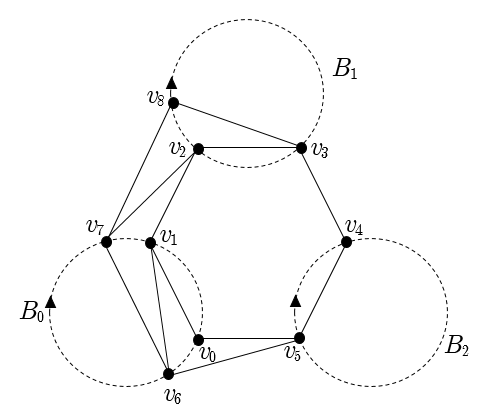}
\caption{Good paths}
\label{fig:good}
\end{center}
\end{figure}

We now follow the ideas in Section \ref{extend}. By Definition \ref{gp}(i), $v_j \in B_i$ if and only if $i = h(j) = \lfloor j/2\rfloor \Mod{s}$.
Let $i = h(k - 1)$, so that $v_{k - 1} \in B_i$. We write $[u,v] = \{w \in B_i : u \prec w \prec v\}$.
Define $I(\vb{v}_k) = [v_{k - 1},v_k] \subseteq B_i$ if $k$ is odd and $I(\vb{v}_k) = [v_{k + r - 2},v_{k - 1}] \subseteq B_i$ if $k$ is even, and
\[
X(\vb{v}_k) = \{v \in I(\vb{v}_k) : vv_kv_{k+1}\dots v_{k+r-2} \in H\}
\]
Note that the definition of $X(\vb{v_k})$ is identical to that in Section \ref{extend} but with respect to the ordering $\prec$ of $B_i$, where $i = h(k - 1)$, and in particular, $I(\vb{v}_k), X(\vb{v}_k) \subseteq B_i$.
In Figure 3, $X(\vb{v}_4)$ consists of any vertex $v \in B_1$ clockwise from $v_8$ to $v_3$ such that $v_4 v_5 v_6 v_7 v_8 v \in G$.
Let $S_k(G)$ be the set of ends of good $k$-paths in $G$, and let $T_k(G) = \{\vb{v}_k \in S_k(H) : X(\vb{v}_k) = \emptyset\}$.

\medskip

{\bf Claim 1.} {\em For $k \geq 1$, if $i = h(k - 1)$, then}
\begin{equation}\label{tk2}
|T_k(G)| \leq 2^{s-1}|\partial_i G|.
\end{equation}

{\bf Proof.} If $\vb{v}_k \in S_k(G)$, then $v_{k - 1} \in B_i$ since $i = h(k - 1)$. For $\vb{v}_k \in T_k(G)$, define
$g(\vb{v}_k) = (v_k,v_{k + 1},\dots,v_{k + r - 2})$. Then $v_k v_{k + 1} \dots v_{k + r - 2} \in \partial_iG$ and
$(v_k,v_{k + 1},\dots,v_{k + r - 2})$ is uniquely determined by specifying the order of the pairs $\{v_k,v_{k + 1},\dots,v_{k + r - 2}\} \cap B_j$
for each $j \neq i$. Therefore $g(\vb{v}_k)$ injectively maps elements of $T_k(G)$ to ordered elements of $\partial_i G$, where each element of $\partial_iG$ is
ordered in $2^{s-1}$ ways. We conclude $|T_k(G)| \leq 2^{s-1}|\partial_i G|$. \qed

\medskip

{\bf Claim 2.} {\em For $k \geq 1$, }
\begin{equation}\label{induction}
|S_k(G)| \geq 2^s |G| - 2^{s-1} \sum_{i = 0}^{k-2} |\partial_{h(i)}G|.
\end{equation}

{\bf Proof.} For $k = 1$, we observe for $e \in G$, there are two ways to label the pair $e \cap B_i$ for each $i \in [s]$, and therefore $|S_1(G)| \geq 2^s |G|$.
Suppose (\ref{induction}) holds for some $k \geq 1$. Then we copy the proofs of Propositions \ref{obs} and \ref{injection} to obtain
$|S_{k + 1}(G)| \geq |S_k(G) \backslash T_k(G)|$. By the induction hypothesis (\ref{induction}) and Claim 1,
\begin{eqnarray*}
|S_{k+1}(G)| \; \; \geq \; \;  |S_k(G) \backslash T_k(G)| &\geq& 2^s |G| - 2^{s-1} \sum_{i = 0}^{k-2} |\partial_{h(i)}G| - |T_k(G)| \\
&\geq& 2^s |G| - 2^{s-1} \sum_{i = 0}^{k - 1} |\partial_{h(i)}G|.
\end{eqnarray*}
This completes the induction step and proves (\ref{induction}). \qed

\medskip

{\bf Proof of (\ref{hbound}).} Finally we prove (\ref{hbound}). Taking expectations on both sides of (\ref{induction}), and using (\ref{tb}) and the linearity of expectation:
\begin{equation}\label{expect}
\mathrm{E}(|S_k(G)|) \geq 2^s \mathrm{E}(|G|) - 2^{s-1} \sum_{i = 0}^{k - 2} \mathrm{E}(|\partial_{h(i)}G|) \geq \frac{r!}{s^r} |H| -  \frac{(r - 1)!(k - 1)}{s^{r-1}} |\partial H|.
\end{equation}
Since $G \subseteq H$ has no tight $k$-path, $S_k(G) = \emptyset$. Using this in (\ref{expect}), we obtain (\ref{hbound}). \qed

\medskip

{\bf Proof for $r$ odd.} Let $H$ be an $n$-vertex $r$-graph containing no tight $k$-path. We aim to show
\begin{equation}\label{hbound2}
|H| \leq  \frac{1}{2} \Bigl(k + \Big\lfloor \frac{k-1}{r} \Big\rfloor\Bigr)|\partial H|.
\end{equation}
To prove (\ref{hbound2}), we reduce the case $r$ is odd to the case $r$ is even, and apply (\ref{hbound}) from the last section.
Form the $(r + 1)$-graph $H^+$ by adding a set $X$ of vertices to $V(H)$, and let $H^+ = \{\{x\} \cup e : x \in X, e \in H\}$. It is convenient to
let $\phi(\ell) = \lceil (\ell + r)/(r + 1)\rceil$ for $\ell \geq 1$.

\medskip

It is straightforward to check that if $P = v_0 v_1 \dots v_{\ell + r - 1}$ is a tight $\ell$-path in $H^+$, then $|V(P) \cap X| \leq \phi(\ell)$. In addition, the sequence of vertices $v_i \in V(P) \backslash X$
in increasing order of subscripts forms a tight path in $H$ of length at least $\ell + 1 - \phi(\ell)$. Setting $\ell = k + \lfloor (k - 1)/r \rfloor + 1$, we
have $\ell + 1 - \phi(\ell) = k$, and therefore $H^+$ has no tight $\ell$-path. By (\ref{hbound}) applied to $H^+$,
\[ |H^+| \leq \frac{\ell - 1}{2} |\partial H^+|.\]
Since $|H^+| = |X| |H|$ and $|\partial H^+| = |X||\partial H| + |H|$, we find
\[ |X||H|\leq \frac{\ell - 1}{2} |X||\partial H| + \frac{\ell - 1}{2}|H|.\]
Choosing $|X| > (\ell - 1)|H|/2$ and dividing by $|X|$, we obtain $|H| \leq (\ell - 1)|\partial H|/2$. Since $(\ell - 1)/2 = (k + \lfloor (k - 1)/r \rfloor)/2$,
this proves (\ref{hbound2}). \qed

\section{Concluding remarks}

$\bullet$ It turns out using Steiner systems with 
arbitrarily large block sizes~\cite{GKLO,Keevash}) that for each fixed $k,r \geq 2$, both of the following limits exist:
\[ z(k,r) := \lim_{n \rightarrow \infty} \frac{\ex_{\circlearrowright}(n,\mathsf P_k^r) }{{n \choose r - 1}} \quad \quad \mbox{ and } \quad \quad 
p(k,r) := \lim_{n \rightarrow \infty} \frac{\ex(n,P_k^r) }{{n \choose r - 1}}.\]
The first limit is determined by Theorem~\ref{mainzigzag} and the construction
in Section \ref{lb} for $k \equiv 1 \Mod{r}$, and for $r \geq 4$ the problem is wide open in all remaining cases, even for $k = 2$. 

\medskip

$\bullet$ For $k \leq r + 1$, an improvement over Theorem \ref{tight-path} is possible, slightly improving the results of Patk\'{o}s~\cite{Patkos}: we prove by induction on $r$ that
if $r \geq k - 1$, the
\[ \ex(n,P_k^r) \leq \frac{k^2}{2r} {n \choose r - 1}.\]
If $r = k - 1$, this follows from Theorem \ref{tight-path}. Suppose $r \geq k$ and we have proved the bound for $(r - 1)$-graphs. Let $H$
be an $r$-graph with no tight $k$-path and pick a vertex $v \in V(H)$ contained in at least $r|H|/n$ edges of $H$.
Consider the link hypergraph $H_v = \{e \in \partial H : e \cup \{v\} \in H\}$. Then
$H_v$ has no tight $k$-path, otherwise adding $v$ to each edge we get a tight $k$-path in $H$. By induction,
\[ \frac{r|H|}{n} \leq |H_v| \leq \frac{k^2}{2(r - 1)} {n - 1 \choose r - 2} \leq \frac{k^2}{2n} {n \choose r - 1} \]
and this implies $|H| \leq \frac{k^2}{2r} {n \choose r - 1}$, as required.

\medskip

$\bullet$ It is possible when $r \geq 3$ is odd to obtain a very slight improvement over Theorem \ref{tight-path}, namely
\[ \ex(n,P_k^r) \leq \frac{1}{r}(\sqrt{a} + \sqrt{b})^2 {n \choose r - 1}\]
where $a = \lfloor (k - 1)/r \rfloor$ and $b = (r - 1)(k - 1 - a)/2$ and $n$ is sufficiently large. For the purpose of comparison,
we obtain 
\[ p(k,r) \leq k \cdot \Bigl(\frac{1}{2} + \frac{\sqrt{2} - 1}{r} + c\Bigr) {n \choose r - 1} \]
where $c = O(r^{-2})$. For $r = 3$, we find that the upper bound is at most $\frac{1}{9}(3 + \sqrt{8})k\cdot {n \choose 2}$.

\medskip

$\bullet$ The proof in Section 5.1 shows that if $s = r/2$, and $G$ is an $n$-vertex $r$-graph such that $V(G)$ is partitioned into sets $B_0,B_1,\dots,B_{s-1}$ with $|B_i| = n/s$ and
$|e \cap B_i| = 2$ for $0 \leq i < s$ and every edge $e \in G$, then $|G| \leq 2^{s - 1} (k - 1)(n/r)^{r-1}$, and this is asymptotically tight if $k \equiv 1 \Mod{r}$. Indeed, let $B_0,B_1,\dots,B_{s-1}$ be disjoint sets of
size $n/s$, and let $A_i \subset B_i$ have size $(k - 1)/r$. Then let $G_i$ consist of all $r$-sets with one vertex in $A_i$, one vertex in $B_i \backslash A_i$,
and two vertices in each $B_j \backslash A_j$ for $0 \leq j < s, j \neq i$. Let $G = \bigcup_{i = 0}^{s-1} G_i$. Then $|e \cap f| \leq r - 2$ for $e \in G_i$ and $f \in G_j$ with $i \neq j$, so if $G$
contains a tight $k$-path, then the tight $k$-path is contained in some $G_i$. However, $A_i$ is a transversal of each $G_i$, so $G_i$ cannot contain a tight $k$-path. Therefore $G$ has no tight $k$-path, and furthermore
\[ |G| = 2^{s - 1}(k - 1) \Bigl(\frac{n}{r}\Bigr)^{r - 1} + O(n^{r - 2}).\]

\medskip

$\bullet$ In forthcoming work, we consider extremal problems for various other analogs of paths and matchings in the setting of convex geometric hypergraphs,
having considered only zigzag paths and stacks of even uniformity in this paper.

\paragraph{Acknowledgement.}
The last author would like to thank Gil Kalai for stimulating discussions occuring at the Oberwolfach Combinatorics Conference, 2017.
This research was partly conducted during AIM SQuaRes (Structured Quartet Research Ensembles) workshops, and we gratefully acknowledge the support of AIM.

\bigskip

{\small

\begin{tabular}{ll}
\begin{tabular}{l}
{\sc Zolt\'an F\" uredi} \\
Alfr\' ed R\' enyi Institute of Mathematics \\
Hungarian Academy of Sciences \\
Re\'{a}ltanoda utca 13-15. \\
H-1053, Budapest, Hungary. \\
E-mail:  \texttt{zfuredi@gmail.com}.
\end{tabular}
& \begin{tabular}{l}
{\sc Tao Jiang} \\
Department of Mathematics \\ Miami University \\ Oxford, OH 45056, USA. \\ E-mail: \texttt{jiangt@miamioh.edu}. \\
$\mbox{ }$
\end{tabular} \\ \\
\begin{tabular}{l}
{\sc Alexandr Kostochka} \\
University of Illinois at Urbana--Champaign \\
Urbana, IL 61801 \\
and Sobolev Institute of Mathematics \\
Novosibirsk 630090, Russia. \\
E-mail: \texttt {kostochk@math.uiuc.edu}.
\end{tabular} & \begin{tabular}{l}
{\sc Dhruv Mubayi} \\
Department of Mathematics, Statistics \\
and Computer Science \\
University of Illinois at Chicago \\
Chicago, IL 60607. \\
\texttt{E-mail: mubayi@uic.edu}.
\end{tabular} \\ \\
\begin{tabular}{l}
{\sc Jacques Verstra\"ete} \\
Department of Mathematics \\
University of California at San Diego \\
9500 Gilman Drive, La Jolla, California 92093-0112, USA. \\
E-mail: {\tt jverstra@math.ucsd.edu.}
\end{tabular}
\end{tabular}
}

\end{document}